\title{The $\gamma$-support as a micro-support}
\author[1]{Tomohiro Asano\thanks{\texttt{tasano@se.kanazawa-u.ac.jp}, \texttt{tomoh.asano@gmail.com}}}
\author[2]{St\'ephane Guillermou\thanks{\texttt{Stephane.Guillermou@univ-nantes.fr}. Also supported by ANR COSY (ANR-21-CE40-0002)}}
\author[3]{Vincent Humili\`ere\thanks{\texttt{vincent.humiliere@imj-prg.fr}. Also supported by ANR COSY (ANR-21-CE40-0002) and ANR CoSyDy (ANR-CE40-0014)}} 
\author[4]{ \\ Yuichi Ike\thanks{\texttt{ike@mist.i.u-tokyo.ac.jp}, \texttt{yuichi.ike.1990@gmail.com}. Also supported by JSPS KAKENHI (21K13801 and 22H05107)}}
\author[5]{Claude Viterbo\thanks{\texttt{Claude.Viterbo@universite-paris-saclay.fr}. Also supported by ANR COSY (ANR-21-CE40-0002)}}
\affil[1]{Faculty of Electrical, Information and Communication Engineering,
Institute of Science and Engineering, Kanazawa University, Kakumamachi, Kanazawa,
920-1192, Japan}
\affil[2]{UMR CNRS 6629 du CNRS Laboratoire de Mathématiques Jean LERAY
2 Chemin de la Houssinière, BP 92208, F-44322 NANTES Cedex 3 France}
\affil[3]{Sorbonne Universit\'e and Université de Paris, CNRS, IMJ-PRG, F-75005 Paris, France
 and Institut Universitaire de France.}
\affil[4]{Graduate School of Information Science and Technology, The University of
Tokyo, 7-3-1 Hongo, Bunkyo-ku, Tokyo 113-8656, Japan}
\affil[5]{ Universit\'e Paris-Saclay, CNRS, Laboratoire de mathématiques d'Orsay, 91405, Orsay, France. }
\date{\today}
\begin{document}

\maketitle

\begin{abstract}
    We prove that for any element $L$ in the completion of the space of smooth compact exact Lagrangian submanifolds of a cotangent bundle equipped with the spectral distance, the $\gamma$-support of $L$ coincides with the reduced micro-support of its sheaf quantization.
    As an application, we give a characterization of the Vichery subdifferential in terms of $\gamma$-support. 
\end{abstract}

\section{Introduction}

Let $M$ be a $C^\infty$ closed manifold. The space $\frakL(T^*M)$ of smooth
compact exact Lagrangian submanifolds of $T^*M$ carries a distance, $\gamma$, called the spectral distance. 
This was introduced by Viterbo \cite{viterbo1992} for the class of Lagrangians that are Hamiltonian isotopic to the zero section, and later extended\footnote{In the Floer context, the extension follows from the spectral invariants defined in \cite{Oh}, \cite{M-V-Z}, and \cite{H-L-S} by using the main result of \cite{F-S-S}. One can also use later results in the sheaf framework, by using the spectral invariants defined by Vichery in \cite{Vichery-2} and the quantization of exact Lagrangians from \cite{Guillermou} (see also \cite{Viterbo-Sheaves}).} to $\frakL(T^*M)$. The metric space $(\frakL(T^*M), \gamma)$ is not complete (not even a Baires space \cite[App.~A]{viterbo2022supports}), and we are interested in this note in its completion. 
Its study was initiated in \cite{Humiliere-completion}, pursued further in \cite{viterbo2022supports}, and has applications to Hamilton-Jacobi equations \cite{Humiliere-completion}, Symplectic Homogenization theory \cite{Viterbo-Homogenization}, and to conformally symplectic dynamics \cite{Arnaud-Humiliere-Viterbo}.

The elements of the completion $\widehat\frakL(T^*M)$ are by definition certain equivalence classes of
Cauchy sequences with respect to the spectral norm $\gamma$. Despite their very abstract nature,
they admit a geometric incarnation first introduced by Humili\`ere in \cite{Humiliere-completion} and in a
different version called \emph{$\gamma$-support} much more recently by Viterbo in \cite{viterbo2022supports}. For a smooth Lagrangian $L\in\frakL(T^*M)$ we have $\gammasupp(L)=L$. We refer the reader to \cref{subsec:gamma-support} for the precise definition of the $\gamma$-support.

To each element of $\frakL(T^*M)$, it is also possible to associate an object $F_L$ in the derived category of sheaves $\SD (k_{M\times \mathbb R})$, which more precisely belongs to the so-called Tamarkin category. 
This was proved by Guillermou-Kashiwara-Schapira \cite{G-K-S} in the class of Lagrangians that are Hamiltonian isotopic to the zero section and later extended to $\frakL(T^*M)$ by Guillermou \cite{Guillermou} and Viterbo \cite{Viterbo-Sheaves}. The object $F_L$ is called \emph{sheaf quantization} of $L$. Conversely, to an object $F$ in the Tamarkin category, one can associate a closed subset of $T^*M$ which we call \emph{reduced micro-support} and denote $\RS(F)$, in such a way\footnote{In fact, the object $F_L$ is only defined up to shift, but $\RS(F_L)$ is well-defined. See \cref{subsec:SQ}.} that $\RS(F_L)=L$. 
The sheaf quantization can often be used instead of a generating function, and it is known to exist in more general cases (in particular for any exact embedded Lagrangian).
This approach allows, on one hand, to get rid of the ``Hamiltonian isotopic to the zero section'' condition required to use generating functions quadratic at infinity, and allows to prove or reprove a number of results in symplectic topology (see, for example, \cite{guillermou19}).

The correspondence $L \mapsto F_L$ was recently extended in \cite{GV2022singular} to the completion of $\frakL(T^*M)$ (see also \cite{AI22} for a similar result in different settings).
We therefore obtain two notions of support for an element $L$ in
$\widehat\frakL(T^*M)$, namely the $\gamma$-support of $L$ and the reduced
micro-support $F_L$. These two notions coincide on  $\frakL(T^*M)$, and it was asked by Guillermou and Viterbo ({\cite[Problem~9.10]{GV2022singular}}) whether they coincide in general. Our main result below answers positively this question. 

\begin{theorem}\label{th:main}
	For any $L \in \widehat{\frakL}(T^*M)$, one has 
	\begin{equation}
		\gammasupp(L) = \RS(F_L).
	\end{equation}
\end{theorem}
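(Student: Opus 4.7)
The strategy is to prove the two inclusions $\RS(F_L) \subseteq \gammasupp(L)$ and $\gammasupp(L) \subseteq \RS(F_L)$ separately, exploiting that both supports agree with $L$ itself on smooth Lagrangians and that sheaf quantization extends to a continuous map from $(\widehat{\frakL}(T^*M), \gamma)$ into the Tamarkin category equipped with its natural interleaving distance, by \cite{GV2022singular}. The main tools would be two upper semi-continuity statements: $\gammasupp$ depends upper semi-continuously on $L$ for the $\gamma$-distance, and $\RS$ depends upper semi-continuously on the sheaf under interleaving convergence. Here upper semi-continuity means: if the limit support is disjoint from a closed set $Z$, then the approximating supports are eventually disjoint from $Z$.

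For the easy inclusion $\RS(F_L) \subseteq \gammasupp(L)$, I would pick a Cauchy sequence $(L_n)_n \subset \frakL(T^*M)$ with $L_n \to L$ in $\gamma$ and with each $L_n$ contained in an open $\epsilon_n$-neighborhood of $\gammasupp(L)$, where $\epsilon_n \to 0$; the existence of such a sequence is built into the characterization of the $\gamma$-support in \cite{viterbo2022supports}. By continuity of sheaf quantization, $F_{L_n} \to F_L$ in interleaving distance, while the smooth case gives $\RS(F_{L_n}) = L_n$. Upper semi-continuity of $\RS$ then forces $\RS(F_L)$ into every $\epsilon$-neighborhood of $\gammasupp(L)$, hence into $\gammasupp(L)$ itself since it is closed.

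For the reverse inclusion $\gammasupp(L) \subseteq \RS(F_L)$, given $p \notin \RS(F_L)$ I need to produce a Cauchy sequence $(L_n')_n \subset \frakL(T^*M)$ with $L_n' \to L$ in $\gamma$ whose members avoid a fixed open neighborhood of $p$. Starting from any approximating sequence $(L_n)_n$, the plan is to modify it: since $F_L$ has trivial reduced micro-support on an open ball $U \ni p$, a sheaf-theoretic cutoff should produce objects $G_n$ close to $F_{L_n}$ in interleaving distance with $\RS(G_n)$ disjoint from a smaller ball $U' \subset U$. If each $G_n$ can be realized, or sufficiently approximated, by the sheaf quantization of a smooth Lagrangian $L_n'$, then transferring back through the $\gamma$–interleaving compatibility yields the sought-after sequence and proves $p \notin \gammasupp(L)$.

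The main obstacle is precisely this realization step: arbitrary objects of the Tamarkin category need not arise as sheaf quantizations of smooth Lagrangians, and one must simultaneously control the interleaving distance to $F_{L_n}$ and the micro-support of the approximation. I expect the key technical input to be either a Guillermou-style quantization theorem adapted to objects in the completion with prescribed micro-support, or an equivalent Hamiltonian-displacement argument: build a Hamiltonian isotopy supported in $U$ that pushes each $L_n$ out of $U'$, and bound its $\gamma$-cost by a sheaf-theoretic invariant that vanishes in the limit because $F_L$ has no micro-support on $U$. Bridging the sheaf-theoretic cutoff and the geometric displacement is the subtle point on which the proof turns.
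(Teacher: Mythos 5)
Your two inclusions are the right decomposition, but the actual mechanism in the paper is quite different from what you propose, and both of your steps have real gaps as written.

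For the inclusion $\gammasupp(L)\subseteq\RS(F_L)$ (the one you treat second and regard as the harder one), the paper's argument is a direct and short Hamiltonian displacement argument that does not involve any cutoff or realization step. The key lemma is: if $H$ is a Hamiltonian with $\supp(H_s)\cap\RS(F)=\emptyset$ for all $s$, then $\Phi^H_1(F)\simeq F$ (this follows from the $\mathrm{GKS}$ kernel construction and \cite[Prop.~5.4.5]{KS90}, since the extra cotangent variable in the $I$-direction vanishes). Combined with the inequality $\gamma(L_1,L_2)\le 2\,d_{\cD(M)}(F_{L_1},F_{L_2})$ from \cite{GV2022singular}, one gets immediately that $\gamma(L,\phi^H_1(L))=0$, i.e.\ $\phi^H_1(L)=L$, for any $H$ supported away from $\RS(F_L)$. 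That is exactly the condition for a point to be outside $\gammasupp(L)$. Your ``sheaf-theoretic cutoff plus realization as a quantization of a smooth Lagrangian'' is both unnecessary and, as you yourself note, not actually carried out; you correctly sense that a displacement argument is lurking, but the one the paper uses is simpler and complete as is.

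For the inclusion $\RS(F_L)\subseteq\gammasupp(L)$ (which you call the ``easy'' one), your key step is the existence of a Cauchy sequence $L_n\to L$ with $L_n$ contained in an $\epsilon_n$-neighborhood of $\gammasupp(L)$. This is \emph{not} ``built into the characterization of the $\gamma$-support''; it is in fact a deep reduction/localization question, and if it were available the entire paper would collapse to a triviality. The definition of $\gammasupp$ only tells you which compactly supported Hamiltonians fix $L$, not that $L$ can be approximated by smooth Lagrangians living near its $\gamma$-support. The paper avoids this issue entirely. Instead it proves a boundary statement: if $U$ is open and disjoint from $\gammasupp(L)$, then every compactly supported Hamiltonian isotopy in $U$ fixes $L$, hence (via $\widehat Q$ and the uniqueness of quantization up to shift) fixes $\RS(F_L)$; since $\RS(F_L)\cap U$ is then invariant under \emph{all} such isotopies, it must be either empty or all of $U$, giving $\partial\,\RS(F_L)\subseteq\gammasupp(L)$. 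The possibility ``$U\subseteq\Int\RS(F_L)$'' is then killed by the product trick: replacing $L$ by $L\times 0_{\bS^1}$ makes $\RS(F_{L\times 0_{\bS^1}})=\RS(F_L)\times 0_{\bS^1}$ have empty interior, and the product formula $\gammasupp(L_1\times L_2)\subseteq\gammasupp(L_1)\times\gammasupp(L_2)$ brings the conclusion back to $L$. These two ideas --- the all-or-nothing dichotomy on open sets disjoint from $\gammasupp$, and the stabilization by $\times\,0_{\bS^1}$ to kill the interior --- are exactly what is missing from your proposal.
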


This allows us to understand the $\gamma$-support in terms of the more classical micro-support from sheaf theory. However, in some instances, we can also get information on the micro-support of a sheaf using $\gamma$-support (see Remark~\ref{Rmks:3.3}(\ref{Rmk:3.3-3})).

The main theorem is proved in \cref{sec:proof}. 
In \cref{sec:subdifferential}, we provide an application of this result to a characterization of the Vichery subdifferential defined in \cite{Vichery}.

\subsection*{Acknowledgments}

This work was carried out while Yuichi Ike was visiting IMJ-PRG and Jean Leray Mathematical Institute. 
He thanks both institutions for their hospitality during the visit.

\section{Preliminaries}

Let $M$ be a $C^\infty$ manifold and $\pi \colon T^*M \to M$ its cotangent bundle. 
We write $(x;\xi)$ for local coordinates of $T^*M$, the  Liouville form $\lambda$ is then defined by $\lambda=\sum_i \xi_i dx_i$. 
We denote the zero section of $T^*M$ by $0_M$.

\subsection{The \texorpdfstring{$\gamma$}{gamma}-support of elements in \texorpdfstring{$\widehat{\frakL}(T^*M)$}{L(TM)}}\label{subsec:gamma-support}

Let $\cL(T^*M)$ denote the set of compact exact Lagrangian branes, i.e., triples $(L, f_L, \widetilde G)$, where $L$ is a compact exact Lagrangian submanifold of $T^*M$, $f_L \colon L \to \bR$ is a function satisfying $df_L=\lambda_{\mid L}$, and $\widetilde G$ is a grading of $L$ (see \cite{Seidel-graded, viterbo2022supports}).
By abuse of notation, we simply write $L$ for an element $(L, f_L, \widetilde G)$ of $\cL(T^*M)$.
The action of $\bR$ on $\cL(T^*M)$ given by $(L,f_L, \widetilde G) \mapsto (L,f_L-c, \widetilde G)$ is denoted by $T_c$. 
For $L_1, L_2$ in $\cL(T^*M)$ we define as in \cite{viterbo2022supports} the spectral invariants $c_+(L_1,L_2)$ and $c_-(L_1,L_2)$ and finally 
\[
    \cdis (L_1,L_2)= \vert c_+(L_1,L_2)\vert + \vert c_-(L_1,L_2)\vert.
\]
Set $\mathfrak L(T^*M)$ to be the set of compact exact Lagrangians, where we do not record the primitive or grading.
For $L_1, L_2$ in $\mathfrak L(T^*M)$, we define 
\[
    \gamma(L_1,L_2)= \inf_{c\in \mathbb R} \cdis (L_1,T_c L_2)=c_+(L_1,L_2)-c_-(L_1,L_2).
\]
Denote by $\widehat{\frakL}(T^*M)$ (resp.\ $\widehat{\cL}(T^*M)$) the completion of $\frakL(T^*M)$ (resp.\ $\cL(T^*M)$) with respect to $\gamma$ (resp.\ $\cdis$).
We use the same symbol $T_c$ to mean the action on $\widehat{\cL}(T^*M)$ extending that on $\cL(T^*M)$.

Note that the standard action of the group of compactly supported Hamiltonian diffeomorphisms $\mathrm{Ham}_c(T^*M)$ on $\frakL(T^*M)$ given by $(\phi, L)\mapsto \phi(L)$ naturally extends to an action of $\mathrm{Ham}_c(T^*M)$ on the completion  $\widehat{\frakL}(T^*M)$. 
We are now ready to define the $\gamma$-support.

\begin{definition}[Viterbo \cite{viterbo2022supports}]\label{def:gamma-supp} 
    Let $L\in\widehat{\frakL}(T^*M)$. 
    The $\gamma$-support of $L$, denoted $\gammasupp(L)$, is the complement of the set of all $x\in T^*M$ which admit an open neighborhood $U$ such that $\phi(L)=L$ for any Hamiltonian diffeomorphism $\phi$ supported in $U$.
\end{definition}

When $L$ is a genuine smooth Lagrangian submanifold, i.e., belongs to $\frakL(T^*M)$, then $\gammasupp(L)=L$ (see \cite[Prop.~6.17.(1)]{viterbo2022supports}). In general, $\gammasupp(L)$ is a closed subset of $T^*M$ which can be very singular. However, not every closed subset can arise as a $\gamma$-support since $\gamma$-supports are always coisotropic in a generalized sense (called $\gamma$-coisotropic, see \cite[Thm.~7.12]{viterbo2022supports}). We will use the following property (see \cite[Prop.~6.20.(4)]{viterbo2022supports}): given smooth closed manifolds $M_1, M_2$, we have
\begin{equation}\label{eq:gammasupp-product}
    \gammasupp(L_1\times L_2)\subset\gammasupp(L_1)\times\gammasupp(L_2)
\end{equation}
for any $L_1\in\widehat{\frakL}(T^*M_1)$ and $L_2\in\widehat{\frakL}(T^*M_2)$.

We refer the interested reader to \cite{viterbo2022supports} for many further properties of the $\gamma$-support.

\subsection{Sheaf quantization of elements in \texorpdfstring{$\widehat{\cL}(T^*M)$}{L(TM)}}\label{subsec:SQ}

We fix a field $\bfk$ throughout the paper.
Given a $C^\infty$-manifold without boundary $X$,
we let $\SD(\bfk_X)$ denote the unbounded derived category of sheaves of $\bfk$-vector spaces on $X$. 
We denote by $\bfk_X$ the constant sheaf on $X$ with stalk $\bfk$.
For an inclusion $i \colon Z \hookrightarrow X$ of a locally closed subset, we also write $\bfk_Z$ for the zero-extension to $X$ of the constant sheaf on $Z$ with stalk $\bfk$. 
For an object $F \in \SD(\bfk_X)$, we denote by $\MS(F) \subset T^*X$ its micro-support, which is defined in \cite{KS90} (see also Robalo--Schapira~\cite{RS18} for the unbounded setting). 

We now recall the definition of the Tamarkin category~\cite{Tamarkin} (see also \cite{GS14}). 
We denote by $(t;\tau)$ the canonical coordinate on $T^*\bR_t$.
The Tamarkin category $\cD(M)$ is defined as the quotient category 
\[
	\SD(\bfk_{M \times \bR_t})/\SD_{\{\tau \le 0\}}(\bfk_{M \times \bR_t}),
\]
where $\SD_{\{\tau \le 0\}}(\bfk_{M \times \bR_t}) \coloneqq \{F \in \SD(\bfk_X) \mid \MS(F) \subset \{ \tau \leq 0 \}\}$ is the full triangulated subcategory of $\SD(\bfk_X)$.
The category $\cD(M)$ is equivalent to the left orthogonal ${}^\perp \SD_{\{\tau \le 0\}}(\bfk_{M \times \bR_t})$. 
For an object $F \in \cD(M)$, we define its reduced micro-support $\RS(F) \subset T^*M$ by 
\[
	\RS(F) \coloneqq \overline{\rho_t(\MS(F) \cap \{ \tau >0\})},
\]
where $\{ \tau >0\} \subset T^*(M \times \bR_t)$, $\rho_t \colon \{ \tau >0\} \to T^*M, (x,t;\xi,\tau) \mapsto (x;\xi/\tau)$, and $\overline{A}$ denotes the closure of $A$.

We can also describe the action of $\Ham_c(T^*M)$ on $\cD(M)$ as follows. 
Let $H \colon T^*M \times I \to \bR$ be a compactly supported Hamiltonian function and denote by $\phi^H=(\phi^H_s)_{s \in I} \colon T^*M \times I \to T^*M$ the Hamiltonian isotopy generated by $H$.
Then we can construct an object $K^H \in \SD(\bfk_{(M\times\bR)^2 \times I})$ whose micro-support coincides with the Lagrangian lift of the graph of $\phi^H$ outside the zero section (see \cite{G-K-S} for the definition).
For $s \in I$, we set $K^H_s \coloneqq K^H|_{(M \times \bR)^2 \times \{s\}} \in \SD(\bfk_{(M \times \bR)^2})$. 
We define a functor $\Phi^H_s \colon \cD(M) \to \cD(M) \ (s \in I)$ to be the composition with $K^H_s$.
For any $F \in \cD(M)$, we find that
\[
    \RS(\Phi^H_s(F))=\phi^H_s(\RS(F)).
\]

We now explain the sheaf quantization of an element of $\cL(T^*M)$.
For $L\in \cL(T^*M)$, we define 
\[
	\widetilde{L} \coloneqq \{ (x,t;\xi,\tau) \mid \tau >0, (x;\xi/\tau) \in L, t=-f_L(x;\xi/\tau) \}.
\]
Guillermou~\cite{Guillermou} (see also \cite{guillermou19, Viterbo-Sheaves}) proved the existence and the uniqueness of an object $F_L \in \cD(M)$ that satisfies $\MS(F_L) \setminus 0_{M \times \bR_t} =\widetilde{L}$ and $F_L|_{M \times (c,\infty)} \simeq \bfk_{M \times (c,\infty)}$ for a sufficiently large $c>0$. 
The object $F_L$ is called the sheaf quantization of $L$. 
We write this correspondence by $Q \colon \cL(T^*M) \to \cD(M), L \mapsto F_L$. 
Note that the grading of $L$ specifies the grading of $F_L$; in other words, $Q$ sends $L[k]$ to $F_L[k]$. 
However, we shall mostly forget about gradings here. 

We note that $\phi^H$ has a canonical lift to a homogeneous Hamiltonian isotopy of $T^*(M\times\bR) \setminus 0_{M\times\bR}$ (use the extra variable $\tau$ to make $H$ homogeneous), or equivalently, a contact isotopy of $J^1(M)$.  
In this way $\phi^H$ also acts on $\cL(T^*M)$. 
Moreover $\phi^H$ commutes with $T_c$ and it extends to $\widehat{\cL}(T^*M)$.
By the uniqueness, we find that
\begin{equation}\label{eq:QandPhi}
    Q(\phi^H_1(L)) \simeq \Phi^H_1(Q(L))     
\end{equation}
for any compact supported Hamiltonian function $H$.

We can define an interleaving-like distance $d_{\cD(M)}$ on the Tamarkin category $\cD(M)$ (see Kashiwara-Schapira~\cite{K-S-distance}, Asano--Ike~\cite{AI20,AI22}, and Guillermou--Viterbo~\cite{GV2022singular}). 
Since there are several different definitions and conventions for distance on $\cD(M)$, we give the definition here. 
For $c \in \bR$, we let $T_c \colon M \times \bR_t \to M \times \bR_t, (x,t) \mapsto (x,t+c)$ be the translation map to the $\bR_t$-direction by $c$. 
For an object $F \in \cD(M)$, we simply write $T_cF$ for ${T_c}_*F$.
Note that $Q$ sends $T_c L$ to $T_c F_L$.
For $F \in \cD(M)$ and $c \ge 0$, we have a canonical morphism $\tau_c(F) \colon F \to T_cF$ in $\cD(M)$ (see \cite{Tamarkin,GS14} for details).
Using the canonical morphisms, we can define the (pseudo-)distance on $\cD(M)$ as follows.

\begin{definition}
    Let $F,G \in \cD(M)$ and $a,b \ge 0$.
    \begin{enumerate}
        \item The pair $(F,G)$ is said to be \emph{$(a,b)$-isomorphic} if there exist morphisms $\alpha \colon F \to T_a G$ and $\beta \colon G \to T_b F$ in $\cD(M)$ such that 
        \[
        \begin{cases}
            \ld F \xrightarrow{\alpha} T_a G \xrightarrow{T_a \beta} T_{a+b} F \rd = \tau_{a+b}(F), \\
            \ld G \xrightarrow{\beta} T_b F \xrightarrow{T_b \alpha} T_{a+b} G \rd = \tau_{a+b}(G).
        \end{cases}
        \]
        \item We define 
        \[
            d_{\cD(M)}(F,G) \coloneqq \inf \lc a+b \relmid \text{$(F,G)$ is $(a,b)$-isomorphic} \rc.
        \]
    \end{enumerate}
\end{definition}

In Asano--Ike~\cite{AI22} and Guillermou--Viterbo~\cite{GV2022singular}, it is shown that $d_{\cD(M)}$ is complete\footnote{Since $d_{\cD(M)}$ is a pseudo-distance, the limit is not necessarily unique.}.
In Guillermou--Viterbo~\cite{GV2022singular}, Remark 6.12, it is also proved that for $L_1,L_2 \in \cL(T^*M)$
\begin{equation}\label{eq:ineq-c}
	d_{\cD(M)}(F_{L_1},F_{L_2}) \le \cdis (L_1,L_2) \le 2 d_{\cD(M)}(F_{L_1},F_{L_2}).
\end{equation}
Hence, using the completeness and the non-degeneracy of the distance for limits of constructible sheaves (\cite[Prop.~B.7]{GV2022singular}, we can extend $Q \colon \cL(T^*M) \to \cD(M)$ as 
\begin{equation}\label{eq:defQhat}
	\widehat{Q} \colon \widehat{\cL}(T^*M) \to \cD(M).
\end{equation}
We still write $F_L=\widehat{Q}(L)$ for $L \in \widehat{\cL}(T^*M)$.
Note that $\widehat Q$ also satisfies $\widehat Q(T_c L) \simeq T_c \widehat Q (L)$ for $L \in \widehat \cL(T^*M)$.
By a result of Viterbo~\cite[Prop.~5.5]{viterbo2022supports}, the canonical map $\widehat{\cL}(T^*M) \to \widehat{\frakL}(T^*M)$ is surjective, and two elements $L_1,L_2 \in \widehat{\cL}(T^*M)$ have the same image if and only if they coincide up to shift.
Hence, for $L \in \widehat{\frakL}(T^*M)$, the object $F_L \in \cD(M)$ is well-defined up to shift.
In particular, $\RS(F_L)$ is well-defined for $L \in \widehat{\frakL}(T^*M)$. 

Since the action of
  $\Phi^H$ on $\cL(T^*M)$ (or also $\cD(M)$) commutes with $T_c$, it  is an isometry. It follows that the extension of this action to $\widehat{\cL}(T^*M)$ still satisfies~\eqref{eq:QandPhi}:
\begin{equation}
\label{eq:QandPhi2}
    \widehat{Q}(\phi^H_1(L)) \simeq \Phi^H_1(\widehat{Q}(L)). 
\end{equation}

\section{Proof of the main result}\label{sec:proof}

Our proof of \cref{th:main} will use the following lemma.

\begin{lemma}\label{lemma:metric_empty}
  Let $F \in \cD(M)$. 
  We assume that a Hamiltonian function $H \colon T^*M \times I \to \bR$ satisfies $\supp(H_s) \cap \mathrm{RS}(F) = \emptyset$ for all $s\in I$. Then $F \simeq \Phi^H_1(F)$.
\end{lemma}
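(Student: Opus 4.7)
The plan is to realize the one-parameter family $s \mapsto \Phi^H_s(F)$ as the restrictions of a single sheaf on $M \times \bR_t \times I$ and to show that this family is locally constant in $s$ via a micro-support argument. Concretely, I would set $G \coloneqq K^H \circ F \in \SD(\bfk_{M \times \bR_t \times I})$, which by construction satisfies $G|_{M \times \bR_t \times \{s_0\}} \simeq \Phi^H_{s_0}(F)$ in $\cD(M)$. The statement then reduces to producing an isomorphism $G|_{s=0} \simeq G|_{s=1}$ in $\cD(M)$.

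The first substantive observation is that the hypothesis $\supp(H_s) \cap \RS(F) = \emptyset$ implies that $\phi^H_s$ pointwise fixes an open neighborhood of $\RS(F)$, hence $\phi^H_s(\RS(F)) = \RS(F)$ for all $s \in I$. Combined with the identity $\RS(\Phi^H_s(F)) = \phi^H_s(\RS(F))$, this yields $\RS(\Phi^H_s(F)) = \RS(F)$ for every $s$. Next, I would apply the standard micro-support estimate for composition of sheaf kernels to control $\MS(G) \cap \{\tau > 0\}$: any such point must have the form $(x,t,s;\xi,\tau,\sigma)$ with $(x,t;\xi,\tau) = \tilde\phi^H_s(y,u;\eta,\tau)$ for some $(y,u;\eta,\tau) \in \MS(F) \cap \{\tau > 0\}$, and with $\sigma = -\tilde H_s(x,t;\xi,\tau)$, where $\tilde H$ denotes the $\tau$-homogeneous lift of $H$. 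By the previous observation, $\rho_t(x,t;\xi,\tau)$ lies in $\RS(F)$, where $H_s$ vanishes, so $\tilde H_s(x,t;\xi,\tau) = \tau\, H_s(x;\xi/\tau) = 0$ and hence $\sigma = 0$. Thus the reduced micro-support $\RS(G) \subset T^*(M \times I)$ has vanishing covariable in the $I$-direction.

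To conclude, I would invoke a propagation-type argument --- a Tamarkin-category version of the Kashiwara--Schapira non-characteristic deformation lemma --- showing that a sheaf in $\cD(M \times I)$ whose reduced micro-support has no component along $T^*I$ has all its $I$-slices isomorphic in $\cD(M)$. This gives $G|_{s=0} \simeq G|_{s=1}$, i.e., $F \simeq \Phi^H_1(F)$. The main obstacle, I expect, is precisely this last step: carefully promoting the micro-support estimate on $\{\tau > 0\}$ to an honest isomorphism in the Tamarkin quotient category, which amounts to handling the behaviour of $\MS(G)$ near $\{\tau = 0\}$ once one passes to a representative in the left orthogonal to $\SD_{\{\tau \le 0\}}$. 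A secondary technical point is the micro-support computation for the kernel $K^H$ when the support of $H$ may touch the zero section of $T^*M$; however, as the argument only uses the image of $\MS(F) \cap \{\tau>0\}$ under $\tilde\phi^H_s$, this can be circumvented.
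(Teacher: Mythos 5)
Your proposal follows essentially the same route as the paper: form $G \coloneqq K^H \circ F$, show that the $I$-covariable $\sigma$ vanishes on $\MS(G)$ thanks to the disjointness of $\supp(H_s)$ and $\RS(F)$, and conclude that the $I$-slices of $G$ are all isomorphic. The micro-support computation you describe is exactly the one the paper records.

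The one place where you hedge --- the step you call ``the main obstacle'', a ``propagation-type argument'' or a ``Tamarkin-category version of the non-characteristic deformation lemma'' --- is in fact handled by a simpler and more elementary statement, namely \cite[Prop.~5.4.5]{KS90}: if $p\colon Y \to X$ is a submersion and $\MS(G) \subset Y \times_X T^*X$ (i.e., the covariable in the fiber direction vanishes on all of $\MS(G)$), then $G$ is the pullback $p^{-1}$ of a sheaf on $X$. Applied with $Y = M\times\bR\times I$ and $X = M\times\bR$, this gives $G \simeq p^{-1}G_0$ and hence $G|_{s=0}\simeq G|_{s=1}$ as an isomorphism in $\SD(\bfk_{M\times\bR})$, not merely in the Tamarkin quotient.

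Your secondary worry about promoting the estimate from $\{\tau>0\}$ to an honest statement about $\MS(G)$ is also less delicate than you fear: working with the representative of $F$ in ${}^\perp\SD_{\{\tau\le 0\}}$, one has $\MS(F)\subset\{\tau\ge 0\}$, and the lifted Hamiltonian $\widetilde H_s$ vanishes identically on $\{\tau = 0\}$ so that $\phi^{\widetilde H}_s$ is the identity there, forcing $\sigma = 0$. Thus the bound $\sigma=0$ holds on all of $\MS(G)$ outside the zero section, which is precisely the hypothesis needed to invoke \cite[Prop.~5.4.5]{KS90} directly, with no separate treatment of $\{\tau\le 0\}$ required.
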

\begin{proof}
  We recall how to construct $K^H$.
  We first lift $H$ to a homogeneous Hamiltonian function $\widetilde H \colon (T^*(M\times\bR) \setminus 0_{M\times\bR}) \times I \to \bR$ by setting $\widetilde{H}_s(x,t;\xi,\tau) \coloneqq \tau H_s(x;\xi/\tau)$ for $\tau \neq 0$ and $\widetilde{H}_s=0$ when $\tau=0$. 
  Then we apply the results for homogeneous Hamiltonian isotopies in \cite{G-K-S}.  
  Composing $K^H$ with $F$ yields a sheaf $G$ on $M \times\bR\times I$ whose micro-support, outside the zero section, is given by 
  \begin{gather*}\MS(G) = \left\{
  (x,t,s;\xi,\tau,\sigma) \; \middle| \;  \begin{aligned}
  & \exists (x',t';\xi',\tau')\in \MS(F),\tau' \neq 0, \\ & (x,t;\xi,\tau) = \phi^{\widetilde H}_s(x',t';\xi',\tau'),\\ & \sigma = -\widetilde H_s(x,t;\xi,\tau) = -\tau H_s(x;\xi/\tau)\end{aligned} \right \}.
  \end{gather*}
  Since $\supp(H_s) \cap \RS(F) = \emptyset$ for all $s$, we see that the fiber variable $\sigma$ vanishes on
  $\MS(G)$.  
  By \cite[Prop.~5.4.5]{KS90} this implies that $G$ is the pull-back of a sheaf on $M\times\bR$.
  In particular $G|_{M\times\bR\times \{0\}} \simeq G|_{M\times\bR\times \{1\}}$, which is the claimed result.
\end{proof}

We now turn to the proof of \cref{th:main}. 

\begin{proof}[Proof of \cref{th:main}] 
    We first prove the inclusion $\gammasupp(L) \subset \mathrm{RS}(F_L)$. 
    Let $U$ be an open subset such that $U \cap \mathrm{RS}(F_L)=\emptyset$.
	For any $H$ such that $\supp(H_s) \subset U$ for any $s \in I$, by \cref{lemma:metric_empty} we get $d_{\cD(M)}(F_L,\Phi^H_1(F_L))=0$. By \eqref{eq:ineq-c}, we deduce 
	\[
		\gamma(L,\phi^H_1(L)) \le 2 d_{\cD(M)}(F_{L},\Phi^H_1(F_L))=0,
	\]
	hence $\phi^H_1(L)=L$. This proves that $U \cap \gammasupp(L)=\emptyset$ for any such open subset $U$. As a consequence $\gammasupp(L) \subset \mathrm{RS}(F_L)$.
	
	We next prove $\mathrm{RS}(F_L) \subset \gammasupp(L)$.
	As a first step, we establish the following.
	
	\begin{lemma}\label{lemma:boundary_inclusion}
		For $L \in \widehat{\frakL}(T^*M)$, one has
		\begin{equation}\label{eq:boundary_inclusion}
			\partial\, \mathrm{RS}(F_L) \subset \gammasupp(L),
		\end{equation}
		where $\partial$ means topological boundary, i.e., $\partial \mathrm{RS}(F_L)=\RS(F_L) \cap \Int(\mathrm{RS}(F_L))^c$.
	\end{lemma}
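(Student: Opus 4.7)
The plan is to argue by contrapositive: assuming $x \notin \gammasupp(L)$, I will show $x \notin \partial\,\RS(F_L)$. The key insight is that the condition defining the $\gamma$-support, namely invariance of $L$ under every Hamiltonian diffeomorphism supported in a neighborhood $U$, will translate via sheaf quantization into invariance of $\RS(F_L) \cap U$ under the same group; transitivity of the Hamiltonian action on a connected open set will then force this intersection to be either empty or all of $U$, and in either case $x$ cannot lie on the topological boundary.

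First, by \cref{def:gamma-supp} I fix a connected open neighborhood $U$ of $x$ (for instance, a small ball) such that $\phi(L) = L$ in $\widehat{\frakL}(T^*M)$ for every Hamiltonian diffeomorphism $\phi$ compactly supported in $U$. To pass to the sheaf side, I take $\phi = \phi^H_1$ for an appropriate $H$ and lift $L$ to some $\widetilde L \in \widehat{\cL}(T^*M)$; since the fibers of $\widehat{\cL}(T^*M) \to \widehat{\frakL}(T^*M)$ are $T_c$-orbits \cite[Prop.~5.5]{viterbo2022supports}, we have $\phi^H_1(\widetilde L) = T_c \widetilde L$ for some $c \in \bR$. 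Applying $\widehat Q$ and combining \eqref{eq:QandPhi2} with $\widehat Q(T_c \widetilde L) \simeq T_c \widehat Q(\widetilde L)$ yields $\Phi^H_1(F_L) \simeq T_c F_L$; since $\RS$ is invariant under $T_c$ and $\RS(\Phi^H_1(F_L)) = \phi^H_1(\RS(F_L))$, I conclude $\phi^H_1(\RS(F_L)) = \RS(F_L)$.

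Setting $A \coloneqq \RS(F_L) \cap U$, the previous step shows that $A$ is preserved by every element of $\mathrm{Ham}_c(U)$. The classical fact that $\mathrm{Ham}_c(U)$ acts transitively on the connected open set $U$, any two points being joinable by a smooth compactly supported path realized as the flow of a time-dependent Hamiltonian vector field, forces $A$ to be either empty or all of $U$: otherwise some $\phi \in \mathrm{Ham}_c(U)$ would move a point of $A$ onto a point of $U \setminus A$. In the first case $x \notin \RS(F_L)$, in the second $x \in \Int(\RS(F_L))$; in both cases $x \notin \partial\,\RS(F_L)$.

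The only non-cosmetic step is the translation from $\phi^H_1(L) = L$ in $\widehat{\frakL}(T^*M)$ to $\phi^H_1(\RS(F_L)) = \RS(F_L)$, which genuinely uses that $\widehat Q$ exists and intertwines the Hamiltonian actions. All the required ingredients --- the description of the fibers of $\widehat{\cL}(T^*M) \to \widehat{\frakL}(T^*M)$, the relation \eqref{eq:QandPhi2}, and the $T_c$-invariance of $\RS$ --- have already been assembled in \cref{subsec:SQ}, so no serious obstacle is anticipated.
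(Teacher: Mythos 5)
Your argument is correct and follows the same route as the paper's own proof: lift $L$ to $\widehat{\cL}(T^*M)$, translate the invariance $\phi^H_1(L)=L$ into $\Phi^H_1(F_L)\simeq T_c F_L$ via \eqref{eq:QandPhi2}, deduce $\phi^H_1(\RS(F_L))=\RS(F_L)$, and conclude by the transitivity of $\mathrm{Ham}_c(U)$ on a connected $U$. The only difference is cosmetic (contrapositive phrasing, and you spell out the connectedness/transitivity step that the paper leaves implicit).
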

     
    \begin{proof}
        Let $U$ be an open subset such that $U \cap \gammasupp(L)=\emptyset$.  
        Then for any $H$ such that $\supp(H_s) \subset U$ for any $s \in I$, we have $L = \phi^H_1(L)$.  
        As recalled after~\eqref{eq:defQhat} we can lift $L$ to $L' \in \widehat{\cL}(T^*M)$ and we have $\phi^H_1(L') = T_c(L')$ for some $c$.
        By~\eqref{eq:QandPhi2} we deduce $T_c(F_L) \simeq \Phi^H_1(F_L)$, hence $\mathrm{RS}(F_L)=\phi^H_1(\mathrm{RS}(F_L))$.  
        Thus, either $U \cap \mathrm{RS}(F_L)=\emptyset$ or $U \subset \Int(\mathrm{RS}(F))$, which shows~\eqref{eq:boundary_inclusion}.
    \end{proof}
	
    We can now conclude the proof of \cref{th:main}. 
    To prove $\RS(F_L) \subset \gammasupp(L)$, it is enough to show that $\RS(F_L) \times 0_{\bS^1} \subset \gammasupp(L) \times 0_{\bS^1}$.
	Now we consider $L \times 0_{\bS^1} \in \widehat{\cL}(T^*(M \times {\bS^1}))$.
	Then we get $F_{L \times 0_{\bS^1}} \simeq F_L \boxtimes \bfk_{{\bS^1}}$, and hence $\RS(F_{L \times 0_{\bS^1}})=\RS(F_L) \times 0_{{\bS^1}}$, whose interior is empty.
	By \cref{lemma:boundary_inclusion}, we get 
	\[
		\RS(F_L) \times 0_{{\bS^1}}=\RS(F_{L \times 0_{\bS^1}}) \subset \gammasupp(L \times 0_{\bS^1}) \subset \gammasupp(L) \times 0_{\bS^1},
	\]
	where the last inclusion follows from \eqref{eq:gammasupp-product}. 
\end{proof}

\begin{remarks}\label{Rmks:3.3}\leavevmode
\begin{enumerate}
    \item \label{Rmk:3.3-1}
    Let $X, Y$ be two sets in a symplectic manifold. Define the following variant of Usher's distance (see \cite{usher2015observations}) between $X$ and $Y$ as 
    \[
        d_\gamma(X,Y)=\inf \{ \gamma(\varphi) \mid \varphi(X)=Y\}.
    \]
    Then we have for any $L, L'\in \widehat{\mathfrak L}(T^*M)$
    \[
        d_\gamma (\gammasupp(L), \gammasupp(L')) \leq \widehat\gamma (L,L'),
    \]
    where $\widehat \gamma(L,L')=\inf\{\gamma(\varphi)\mid \varphi(L)=L'\}$.
    
    In general $\gamma (L,L') \leq \widehat \gamma (L,L') $, but if we had an equality, this would mean (as in the proof of the theorem)
    \[
        d_\gamma (\mathrm{RS}(F_L), \mathrm{RS}(F_{L'})) \leq 2 d_{\cD(M)}  (F_L,F_{L'}).
    \]
    Then, we may conjecture that in general for all $F,G \in D(M)$ we have 
      \[
        d_\gamma (\mathrm{RS}(F), \mathrm{RS}(G)) \leq 2 d_{\cD(M)}  (F,G).
    \]
    \item \label{Rmk:3.3-2} In \cite{GV2022singular}, it is proved that the micro-support (hence also the reduced micro-support by Prop.~9.4 in \cite{viterbo2022supports}) is $\gamma$-coisotropic. 
    \item \label{Rmk:3.3-3} In \cite{Arnaud-Humiliere-Viterbo}, it is proved that there are indecomposable sets (i.e., compact connected sets that cannot be written as the union of two nontrivial compact connected) that appear as $\gamma$-supports. They can thus also appear as singular support of sheaves, and we may even impose that these are limits of constructible sheaves. 
\end{enumerate}
\end{remarks}

\section{An application to subdifferentials}\label{sec:subdifferential}

Let $f$ be a continuous function on $M$. 
In \cite{Vichery}, Vichery defined a subdifferential of $f$ at $x$ as follows.

\begin{definition}(\cite[Def.~3.4]{Vichery})
    The epigraph of $f$ is the set $Z_f=\{(x,t)\in M \times \mathbb R \mid f(x)\leq t\}$.
    Then $\partial f $ is defined as $-\RS (\bfk_{Z_f})$ and $\partial f(x)=-\RS (\bfk_{Z_f})\cap T_x^*M$ where $-A=\{(x,-p)\mid (x,p)\in A\}$.
\end{definition}
 
A more elementary definition from the same paper by Vichery is the following (\cite[Def.~4.6]{Vichery}).
 
\begin{proposition}
    The vector $\xi \in T_x^*M$ belongs to $\partial f(x)$ if and only $(x,\xi)$  belongs to the closure of the set of pairs $(y;\eta)$ such that setting $a=f(y)$ and $f_\eta(z)=f(z)-\langle \eta,z\rangle$ the map
    \[
        \lim_{\genfrac{}{}{0pt}{}{U\ni x}{ \varepsilon \to 0}}H^*(U\cap f_\eta^{<a+\varepsilon}) \longrightarrow \lim_{\genfrac{}{}{0pt}{}{U\ni x}{ \varepsilon \to 0}}H^*(U\cap f_\eta^{<a})
    \]
    is not an isomorphism.
\end{proposition}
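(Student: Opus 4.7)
The goal is to identify the closed set $\partial f = -\RS(\bfk_{Z_f})$ with the closure of the locus $A$ of pairs $(y;\eta)$ at which the cohomological map in the statement fails to be an isomorphism. My plan is to compute the set $B := -\rho_t(\MS(\bfk_{Z_f}) \cap \{\tau = 1\})$, whose closure is exactly $\partial f$, by a local cohomology computation, and then show that $A$ and $B$ have the same closure.

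I would first locate $\MS(\bfk_{Z_f})$ outside the zero section. Since $\bfk_{Z_f}$ is the constant sheaf $\bfk$ on the interior of the epigraph $Z_f$ and vanishes outside $Z_f$, its micro-support away from the zero section is concentrated on the topological boundary $\partial Z_f = \{(x,t) : t = f(x)\}$. So if $(y, t_0; -\eta, 1) \in \MS(\bfk_{Z_f}) \setminus 0$ (normalizing $\tau = 1$ by conic invariance, and using the sign $-\eta$ to match $\partial f = -\RS$), then $t_0 = f(y)$, and membership in $B$ reduces to the condition $(y, f(y); -\eta, 1) \in \MS(\bfk_{Z_f})$.

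Next I would compute a local cohomology using the test function $\phi(x,t) = t - f(y) - \langle \eta, x-y \rangle$, which has $d\phi = (-\eta, 1)$ and $\phi(y, f(y)) = 0$. In a coordinate chart centered at $y$, the change of variable $t' = t - \langle \eta, x-y \rangle$ turns $Z_f$ into $Z_{f_\eta} = \{t' \geq f_\eta(x)\}$ (so $f_\eta(y) = f(y) = a$) and $\phi$ into $t' - a$. The distinguished triangle
\[
    R\Gamma_{\{\phi \geq 0\}} \bfk_{Z_f} \to \bfk_{Z_f} \to R j_* j^{-1} \bfk_{Z_f}, \qquad j\colon \{\phi < 0\} \hookrightarrow M\times\bR,
\]
combined with the deformation retraction of $U \cap \{t' < a\} \cap Z_{f_\eta}$ onto $U \cap f_\eta^{<a}$ along the contractible fibers $[f_\eta(x), a)$ of the projection $(x, t') \mapsto x$, identifies the stalk at $(y, f(y))$ of the third term with $\lim_{U \ni y} H^*(U \cap f_\eta^{<a})$. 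Perturbing to $\phi_\varepsilon = \phi - \varepsilon$ based at $(y, f(y) + \varepsilon)$ analogously yields $\lim_{U \ni y} H^*(U \cap f_\eta^{<a+\varepsilon})$, and the natural morphisms between these triangles as $\varepsilon \to 0^+$ realize exactly the restriction map appearing in the statement.

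Finally I would close the loop. One direction is immediate: if the colimit map fails to be an isomorphism at $(y; \eta)$, the cofiber above is nonzero, so $(R\Gamma_{\{\phi \geq 0\}} \bfk_{Z_f})_{(y,f(y))} \neq 0$, whence $(y, f(y); -\eta, 1) \in \MS(\bfk_{Z_f})$; thus $A \subset B$ and $\overline{A} \subset \overline{B} = \partial f$. For the converse, I would invoke the Kashiwara--Schapira characterization of $\MS$: if $(y, f(y); -\eta, 1) \in \MS$, then in every conic neighborhood of this covector there are basepoints with nonvanishing local cohomology, which by the computation above forces the existence of $(y'; \eta')$ arbitrarily close to $(y; \eta)$ with the map failing to be an isomorphism, i.e.\ $(y; \eta) \in \overline{A}$. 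This gives $B \subset \overline{A}$, hence $\partial f = \overline{B} \subset \overline{A}$. The main obstacle is precisely this last step: one must verify that the nonvanishing guaranteed by Kashiwara--Schapira, which a priori allows arbitrary smooth test functions, can always be realized using the specific family of affine test functions $\phi_{y', \eta'}(x,t) = t - f(y') - \langle \eta', x-y' \rangle$; this reduces to the fact that stalks of $R\Gamma_{\{\phi \geq 0\}}$ depend only on the germ of the half-space $\{\phi \geq 0\}$ at the basepoint, so any two test functions with the same differential produce the same stalk.
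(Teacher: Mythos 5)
The paper does not prove this proposition: it is imported from Vichery's paper (stated as Def.~4.6 there, with the proof referenced to Section~3.4), so there is no in-paper argument to compare yours against. Your framework is the natural one and presumably close in spirit to Vichery's: identify the map in the statement with the boundary map of the stalk triangle $R\Gamma_{\{\phi\ge 0\}}\bfk_{Z_f}\to\bfk_{Z_f}\to Rj_*j^{-1}\bfk_{Z_f}$ for the affine test function $\phi$, note that the left colimit is the stalk $\bfk$ (for $U$ small and then $\varepsilon$ small one has $U\subset f_\eta^{<a+\varepsilon}$), compute the right-hand stalk via the retraction along $t'$-fibers, and conclude that the map fails to be an isomorphism iff $(R\Gamma_{\{\phi\ge 0\}}\bfk_{Z_f})_{(y,f(y))}\ne 0$. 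This gives $A\subset B$ correctly, and your reduction to showing $B\subset\overline{A}$ is the right shape.

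However, the final step is where the argument has a genuine gap, and the fix you propose does not work. You write that ``stalks of $R\Gamma_{\{\phi\ge 0\}}$ depend only on the germ of the half-space $\{\phi\ge 0\}$ at the basepoint, so any two test functions with the same differential produce the same stalk.'' The first clause is a tautology, but the conclusion does not follow: two $C^1$ functions with the same differential at a point can have very different germs of $\{\phi\ge 0\}$. For instance in $\bR^2_{(x,t)}$, $\phi_1=t$ and $\phi_2=t-x^2$ both have differential $(0,1)$ at the origin, yet $\{\phi_1\ge 0\}=\{t\ge 0\}$ and $\{\phi_2\ge 0\}=\{t\ge x^2\}$ have distinct germs, and already for $F=\bfk_{\{t\ge 0\}}$ one gets $(R\Gamma_{\{\phi_1\ge 0\}}F)_0\simeq\bfk$ while $(R\Gamma_{\{\phi_2\ge 0\}}F)_0\simeq\bfk[-1]$. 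So the passage from the Kashiwara--Schapira test (which quantifies over all $C^1$ test functions with covector in a conic neighborhood) to the specific affine family $\phi_{y',\eta'}$ is not automatic, and this is exactly the nontrivial content of the converse inclusion. To close the gap one must genuinely use the special structure of $\bfk_{Z_f}$ (e.g., that $Z_f$ is an epigraph, so any test function with $\partial_t\psi>0$ reduces, by the implicit function theorem and the $t$-fiber retraction, to a test function $g$ on $M$ alone, and then a separate argument is needed to pass from general $g$ to affine $g$ with nearby slope). As written, the proof is incomplete at this point.
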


We refer to \cite[Section~3.4]{Vichery} for the proof and the connection between this ``homological subdifferential'' and other subdifferentials, but notice that if $f$ is Lipschitz and  $\partial_Cf(x)$ is the Clarke differential at $x$ we have $\partial f(x) \subset \partial_C f(x)$ and the inclusion can be strict. 
 
Note that if $f$ is smooth, the graph of $df$ is an exact Lagrangian submanifold denoted by $\mathrm{graph}(df)$. 
Since $\gamma (\mathrm{graph}(df), \mathrm{graph}(dg)) = \max (f-g)-\min (f-g)= \mathrm{osc}(f-g)$, a $C^0$ Cauchy sequence of functions yields a Cauchy sequence in $\cL(T^*M)$, so that $\mathrm{graph}(df)$ is well defined in $\widehat{\cL}(T^*M)$ for any $f\in C^0(M, \mathbb R)$. 

\begin{proposition} 
    For any continuous function $f:M\to\bR$, we have 
    \[
        \gammasupp (\mathrm{graph}(df))= \partial f.
    \]
\end{proposition}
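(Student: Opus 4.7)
The plan is to apply \cref{th:main} to turn the $\gamma$-support into a reduced micro-support, identify the sheaf quantization $F_{\mathrm{graph}(d(-f))}$ with Vichery's sheaf $\bfk_{Z_f}$ by smooth approximation, and close the loop using the antipodal symmetry $\gammasupp(-L)=-\gammasupp(L)$.

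First I would establish the base case: for any \emph{smooth} $g \colon M \to \bR$, the isomorphism $F_{\mathrm{graph}(dg)} \simeq \bfk_{Z_{-g}}$ in $\cD(M)$, where $Z_{-g}=\{(x,t)\mid t\geq -g(x)\}$. Indeed, with $g$ itself as primitive, $\widetilde{L}=\{(x,-g(x);\tau dg(x),\tau)\mid \tau>0\}$ is exactly the part in $\{\tau>0\}$ of $\MS(\bfk_{Z_{-g}})$ (the inward conormal of $\partial Z_{-g}$), and $\bfk_{Z_{-g}}|_{M\times(c,\infty)}\simeq \bfk_{M\times(c,\infty)}$ for $c>-\min g$, so the uniqueness in Guillermou's construction of sheaf quantization yields the identification.

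Next I would extend this identification to any continuous $g$ by density. Both maps $g\mapsto F_{\mathrm{graph}(dg)}$ and $g\mapsto \bfk_{Z_{-g}}$ from $(C^0(M),\|\cdot\|_\infty)$ to $(\cD(M),d_{\cD(M)})$ are Lipschitz: the first factors as $g\mapsto\mathrm{graph}(dg)\in\widehat{\frakL}(T^*M)\xrightarrow{\widehat{Q}}\cD(M)$, with the first arrow $2$-Lipschitz via $\gamma(\mathrm{graph}(df),\mathrm{graph}(dg))=\mathrm{osc}(f-g)$ and the second $2$-Lipschitz by~\eqref{eq:ineq-c}; for the second, if $\|f-g\|_\infty\leq\epsilon$ then $T_\epsilon Z_{-f}\subset Z_{-g}\subset T_{-\epsilon}Z_{-f}$, and the restriction morphisms of constant sheaves on closed subsets assemble into an $(\epsilon,\epsilon)$-isomorphism (the required compatibility with $\tau_{2\epsilon}$ is immediate because every composition reduces to the canonical restriction $\bfk_{Z_{-f}}\to\bfk_{T_{2\epsilon}Z_{-f}}$). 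Approximating $g\in C^0(M)$ by smooth $g_n$, the Cauchy sequences $F_{\mathrm{graph}(dg_n)}$ and $\bfk_{Z_{-g_n}}$ \emph{coincide} by the base case, so $F_{\mathrm{graph}(dg)}$ and $\bfk_{Z_{-g}}$ are two limits of the same sequence of constructible sheaves in the essential image of $\widehat{Q}$; the non-degeneracy of $d_{\cD(M)}$ on limits of constructible sheaves (\cite[Prop.~B.7]{GV2022singular}) then upgrades this to an honest isomorphism $F_{\mathrm{graph}(dg)}\simeq \bfk_{Z_{-g}}$ in $\cD(M)$.

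To conclude, I would apply this to the continuous function $g=-f$ and invoke the antipodal symmetry of the $\gamma$-support. The antipode $a\colon T^*M\to T^*M$, $(x;\xi)\mapsto(x;-\xi)$, is anti-symplectic; conjugation by $a$ carries the Hamiltonian flow of $H$ to that of $-H\circ a^{-1}$, so a Hamiltonian diffeomorphism supported in $U$ is sent to one supported in $a(U)$, and a direct check in \cref{def:gamma-supp} gives $\gammasupp(a(L))=a(\gammasupp(L))$ for any $L\in\widehat{\frakL}(T^*M)$. Since $a(\mathrm{graph}(df))=\mathrm{graph}(d(-f))$, chaining this with \cref{th:main} and the extended identification yields
\[
    \gammasupp(\mathrm{graph}(df))=-\gammasupp(\mathrm{graph}(d(-f)))=-\RS(F_{\mathrm{graph}(d(-f))})=-\RS(\bfk_{Z_{-(-f)}})=-\RS(\bfk_{Z_f})=\partial f.
\]
I expect the main obstacle to be the second step, where one must upgrade Lipschitz continuity in the pseudo-distance $d_{\cD(M)}$ to a genuine isomorphism in $\cD(M)$; this hinges on the non-degeneracy result of \cite{GV2022singular} for limits of constructible sheaves. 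The antipode argument is standard once the identification is in hand.
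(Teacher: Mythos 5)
Your proof follows the same overall strategy as the paper (reduce to $\widehat{Q}(\mathrm{graph}(df))$ being a constant sheaf on an epigraph-type set via \cref{th:main}, establish the identification for smooth $f$ by the uniqueness of sheaf quantization, extend by $C^0$-density using the Lipschitz estimate on $d_{\cD(M)}$ and the non-degeneracy of the pseudo-distance on limits of constructible sheaves). The main difference is that you carefully track the sign convention in $\widetilde{L}$ (the $t=-f_L$ in the quantization pushes the support \emph{below} rather than above the graph), obtaining $F_{\mathrm{graph}(dg)}\simeq\bfk_{Z_{-g}}$ rather than $\bfk_{Z_g}$, and you then introduce the antipode $a(x;\xi)=(x;-\xi)$ and the identity $\gammasupp(a(L))=a(\gammasupp(L))$ to convert this into the desired statement $\gammasupp(\mathrm{graph}(df))=-\RS(\bfk_{Z_f})=\partial f$. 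The paper, as literally written, asserts $\widehat{Q}(\mathrm{graph}(df))=\bfk_{Z_f}$ and concludes $\gammasupp(\mathrm{graph}(df))=\RS(\bfk_{Z_f})$; but $\partial f$ is \emph{defined} as $-\RS(\bfk_{Z_f})$, so the proof as printed ends one sign away from the statement of the Proposition. A quick check in the smooth case confirms the issue: the conormal of $Z_f=\{t-f(x)\ge 0\}$ in $\{\tau>0\}$ gives $\RS(\bfk_{Z_f})=\mathrm{graph}(-df)$, whereas $\widetilde{L}$ for $L=\mathrm{graph}(df)$ with primitive $f_L=f$ sits in $\{t=-f(x)\}$ with conormal direction $(\tau\, df(x),\tau)$, which matches $\bfk_{Z_{-f}}$, not $\bfk_{Z_f}$.

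Your antipode device is a clean way to absorb this sign, and it is legitimate: $a$ is anti-symplectic, it preserves $\gamma$ (because conjugation by $a$ sends the Hamiltonian flow of $H$ to that of $-H\circ a^{-1}$, so $c_\pm$ are exchanged with a sign and $\gamma=c_+-c_-$ is unchanged), hence $a$ extends to an isometry of $\widehat{\frakL}(T^*M)$; and the commutation $\gammasupp(a(L))=a(\gammasupp(L))$ is exactly the conjugation check you describe against \cref{def:gamma-supp}. You could state explicitly that the extension of $a$ to the completion is by isometry, but this is immediate. The only other remark is a stylistic one: one could instead bypass the antipode entirely and establish $\RS(\bfk_{Z_{-f}})=-\RS(\bfk_{Z_f})$ directly (a short micro-support computation under the involution $t\mapsto -t$), but your route is equally economical. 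In short: your argument is correct, is essentially the paper's argument, and in fact fixes a sign glitch in the printed proof.
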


\begin{proof}
    By applying \cref{th:main} to $F=\bfk_{Z_f}$, we get $\RS(\bfk_{Z_f})=\gammasupp (\mathrm{graph}(df))$
    provided we prove that $\widehat Q (\mathrm{graph}(df))=\bfk_{Z_f}$. This of course holds if $f$ is smooth but needs to be established in the continuous case.
    
    We first claim that for any continuous functions $f,g$ we have:
    \[
        d_{\cD(M)}(\bfk_{Z_{g}},\bfk_{Z_{f}}) \leq 2 \Vert f-g\Vert_{C^0}.
    \]
    For two open sets $Z$ and $Z'$, there is a non-trivial morphism from $\bfk_Z$ to $\bfk_{Z'}$ if and only if $Z' \subset Z$.
    We set $\varepsilon \coloneqq \|f-g\|_{C^0}$, then we have $Z_f \subset Z_{g+\varepsilon}$ and $Z_g \subset Z_{f+\varepsilon}$.
    Since $T_c \bfk_{Z_f} \simeq \bfk_{Z_{f+c}}$ for $c \in \bR$, these inclusions imply that there exist canonical non-trivial morphisms $\bfk_{Z_f} \to T_\varepsilon \bfk_{Z_g}$ and $\bfk_{Z_g} \to T_\varepsilon \bfk_{Z_f}$, which give an $(\varepsilon,\varepsilon)$-isomorphism for the pair $(\bfk_{Z_f},\bfk_{Z_g})$.
    This proves the inequality. 

    Now let $f_n$ be a sequence of smooth functions $C^0$ converging to a continuous function $f$. 
    Then, by the above inequality $\bfk_{Z_{f_n}}$ converges to $\bfk_{Z_f}$ with respect to the distance $d_{\cD(M)}$ as $n$ goes to $+\infty$.
    This implies $\widehat Q (\mathrm{graph}(df))=\bfk_{Z_f}$ and concludes our proof.
\end{proof}

\begin{remark}
If $L \in \widehat{\mathfrak L}_{c}(T^*M)$, then $\gammasupp(L)\cap T_x^*M$ is non empty for all $x \in M$ (see \cite[Def.~6.4 and Prop.~6.10]{viterbo2022supports}). 
For $L=\mathrm{graph} (df)$, this means that $\partial f(x)$ is non-empty for all $x$. 
The condition $\mathrm{graph}(df) \in \widehat{\frakL}_{c}(T^*M)$ should correspond to $f$ being Lipschitz, in which case it is easy to see that $\partial f(x)\neq \emptyset$. 
\end{remark}

\printbibliography

\end{document}